\newtheorem{theorem}{Theorem}
\newtheorem{proposition}[theorem]{Proposition}
\newtheorem{lemma}[theorem]{Lemma}
\theoremstyle{remark}
\theoremstyle{definition}
\newtheorem{example}[theorem]{Example}
\numberwithin{theorem}{section}
\numberwithin{equation}{section}
\begin{document}
\title{ ``Overpartitionized'' Rogers-Ramanujan type identities}
\author{Abdulaziz Alanazi, Augustine O. Munagi, and Andrew V. Sills}
\date{\today}
\maketitle

\begin{abstract}
Many classical $q$-series identities, such as the Rogers--Ramanujan identities,
yield combinatorial interpretations in terms of integer
partitions.   Here we consider algebraically manipulating some of the classical $q$-series
to yield natural combinatorial interpretations in terms of overpartitions. 
Bijective proofs are supplied as well.
\end{abstract}

\section{Introduction}
A \emph{partition} $\lambda$ of an integer $n$ is a weakly decreasing finite sequence
$(\lambda_1, \lambda_2, \dots \lambda_\ell)$ of positive integers that sum to $n$.
Each $\lambda_i$ is called a \emph{part} of the partition $\lambda$.

Corteel and Lovejoy~\cite{CL03} introduced a variation on partitions, called \emph{overpartitions}, that has since inspired a massive literature.
An \emph{overpartition} of $n$ is a weakly decreasing finite sequence of positive integers
where the last occurrence of a part of a given magnitude may or may not be
distinguished by being overlined.   

For example, the partitions of three are $(3), (2,1), (1,1,1)$, while the overpartitions of three
are $(3), (\overline{3}), (2,1), (\overline{2},1), (2,\overline{1}), (\overline{2}, \overline{1}),
(1,1,1), (1,1,\overline{1})$.

There are many well known identities in the theory of partitions.
For example, if we
let $RR_1(n)$ denote the number of partitions of $n$ where all parts differ by at least $2$,
then the combinatorial version of the first Rogers--Ramanujan identity
(see, e.g., \cite{S17}) states that
$RR_1(n)$ equals the number of partitions of $n$ into parts congruent to $\pm 1\pmod{5}$.
In an effort to relate partition enumeration functions from classical partition identities to
enumeration functions for overpartitions, we prove below 
that $RR_1(n)$ also equals the number of overpartitions of $n$ in which the non-overlined parts are odd and distinct, and overlined
parts are at most the number of non-overlined parts.  We give overpartition interpretations
for various Rogers--Ramanujan type identities, including the G\"ollnitz--Gordon identities,
and the Little G\"ollnitz identities.

\section{Euler's Distinct Parts Partitions}
Perhaps the most famous partition identity of all time is Euler's theorem that states
that the number of partitions of $n$ into distinct parts equals the number of partitions
of $n$ into odd parts, see, e.g.,~\cite[p. 5, Cor. 1.2]{Andrews1976}
\begin{proposition} Let $D(n)$ denote the number of partitions of $n$ into distinct parts and let $\overline{D_k}(n)$, where $k\geq 1$, denote the number of overpartitions of $n$ in which the non-overlined parts are distinct and at least $k$ and the overlined parts are at most $k-1$. Then
    \begin{equation}
        D(n)=\overline{D_k}(n),
    \end{equation} for any fixed positive integer $k$.
\end{proposition}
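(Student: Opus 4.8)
The plan is to give both a generating-function argument and an explicit bijection, since the paper advertises bijective proofs. First I would record the classical generating function
\[
\sum_{n\ge 0} D(n)\,q^n = \prod_{j=1}^{\infty}(1+q^j),
\]
which simply encodes that each positive integer $j$ is either used as a part of a distinct-parts partition or not.

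Next I would compute the generating function for $\overline{D_k}(n)$ by exploiting the fact that the threshold $k$ splits the admissible parts into two disjoint regimes. An overlined part must have magnitude in $\{1,\dots,k-1\}$, and because overlining marks only the last occurrence of a magnitude while no non-overlined part below $k$ is permitted, each such magnitude $j$ contributes a factor $(1+q^j)$. Similarly, the non-overlined parts are distinct and lie in $\{k,k+1,\dots\}$, contributing $\prod_{j\ge k}(1+q^j)$. Multiplying these two contributions gives
\[
\sum_{n\ge 0}\overline{D_k}(n)\,q^n = \prod_{j=1}^{k-1}(1+q^j)\prod_{j\ge k}(1+q^j)=\prod_{j=1}^{\infty}(1+q^j),
\]
which coincides with the generating function for $D(n)$ and hence proves the identity for every fixed $k$.

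For the bijection I would send a partition $\lambda$ into distinct parts to the overpartition obtained by overlining exactly those parts of $\lambda$ that are strictly smaller than $k$, leaving the parts $\ge k$ unmarked; the inverse map simply erases all overlines. The key point to check is that this lands in the class counted by $\overline{D_k}(n)$ and is genuinely invertible: since the parts of $\lambda$ are distinct, the marked parts (all $<k$) and the unmarked parts (all $\ge k$) are separately distinct, and the two ranges do not overlap, so no magnitude is repeated across the two blocks and the overlined parts are each $\le k-1$ as required.

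I expect the only subtlety — and it is a mild one — to be confirming that the overpartition conventions are respected, namely that an overlined part of a given magnitude occurs at most once and never coexists with a non-overlined part of the same magnitude. The choice of the cut-off $k$ makes both conditions automatic, so the verification is routine bookkeeping rather than a genuine obstacle.
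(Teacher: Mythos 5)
Your proposal is correct, and it differs from the paper's proof in two worthwhile ways. The paper never touches the product form directly on the overpartition side: it writes the generating function for partitions into distinct parts of size at least $k$ as the $q$-series $\sum_{n\ge 0} q^{kn+n(n-1)/2}/(q;q)_n$, invokes Euler's identity (Andrews, Eq.\ (2.2.6)) with $a$ a power of $q$ to convert this to an infinite product, and then multiplies through by $(-q;q)_{k-1}$ to recover $(-q;q)_\infty$; your argument instead writes both factors as products from the outset, $\prod_{j=1}^{k-1}(1+q^j)\cdot\prod_{j\ge k}(1+q^j)=\prod_{j\ge 1}(1+q^j)$, so Euler's identity is never needed. (Incidentally, the paper's quoted form of Euler's identity contains a typo --- the exponent $q^{n(n-1)/2}$ should pair with $(-a;q)_\infty$, not $(-aq;q)_\infty$ --- which your route sidesteps entirely.) Second, you supply an explicit bijection (overline exactly the parts smaller than $k$, erase overlines to invert), which the paper does not give for this proposition; its bijective proofs begin only with the later theorems. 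What the paper's series form buys is a template: the sum-side representation $\sum q^{kn+n(n-1)/2}(-q;q)_{k-1}/(q;q)_n$ is exactly the kind of expression manipulated in the Rogers--Ramanujan and G\"ollnitz--Gordon sections, where no elementary product factorization of the sum side exists. Your proof is the more elementary and self-contained one for this particular statement; the bijection in particular makes the identity transparent, since the distinctness of parts and the cut-off at $k$ guarantee that no magnitude is repeated across the overlined and non-overlined blocks.
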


\begin{proof}
    Fix a positive integer $k$.
    Let $D_k(n)$ denote the number of partitions of $n$ into distinct parts, with all parts
    at least
    $k$.  
    The generating function for $D_k(n)$ is 
    \begin{equation}\label{pro1}
    \sum_{n\geq 0} D_k(n) q^n = 
        \sum_{n\geq 0} \frac{q^{kn+(0+1+\cdots+(n-1))}}{(q;q)_n}=\sum_{n\geq 0} \frac{q^{kn}q^{\frac{n(n-1)}{2}}}{(q;q)_n}.  
       \end{equation} 
        Note that $D(n) = D_1(n)$, and in particular that 
      \begin{equation} \label{23} \sum_{n\geq 0} D(n) q^n = (-q;q)_\infty
    \end{equation}
Recall Euler's identity~\cite[p. 19, Eq. (2.2.6)]{Andrews1976} 
\begin{equation}\label{euler}
  (-aq;q)_\infty =   \sum_{n\geq0}\frac{a^nq^{\frac{n(n-1)}{2}}}{(q)_n}.
\end{equation}
By setting $a:=q^{k}$, and multiplying both side by $(-q;q)_{k-1}$, we get 
    \begin{equation} \label{25}
 (-q;q)_\infty  = \sum_{n\geq0}\frac{q^{kn}q^{\frac{n(n-1)}{2}}(-q;q)_{k-1}}{(q)_n}
  = \sum_{n\geq 0} \overline{D_k}(n) q^n .
\end{equation}
By combining~\eqref{23} and~\eqref{25}, the result follows.
\end{proof}

\begin{theorem}\label{D}
Let $\overline{E}(n)$ denote the number of overpartitions of $n$ in which the non-overlined parts are distinct with smallest part odd and the parity of 
adjacent parts alternate, where the largest overlined part is at most the number of non-overlined parts.  Then
   \begin{equation}
       D(n)=\overline{E}(n).
   \end{equation} 
\end{theorem}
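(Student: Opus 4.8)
The plan is to mirror the generating-function strategy of the preceding Proposition: I would write each admissible overpartition as its block of non-overlined parts together with its block of overlined parts, using the number $n$ of non-overlined parts as the summation index. Because $\sum_N D(N)q^N=(-q;q)_\infty$ by \eqref{23}, it suffices to prove $\sum_N \overline{E}(N)q^N=(-q;q)_\infty$, so the whole argument reduces to computing, for each fixed $n$, the joint generating function of the two blocks and then summing over $n$. The factorization is legitimate because the only constraint linking the blocks is that the overlined parts be bounded by $n$, which depends on the non-overlined block only through its cardinality.

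First I would treat the non-overlined parts. Listing them in increasing order as $a_1<a_2<\cdots<a_n$, the hypotheses ``smallest part odd'' and ``adjacent parts alternate in parity'' force $a_i\equiv i\pmod 2$, and together with strict monotonicity this gives $a_i\ge i$. Writing $a_i=i+2b_i$ with $b_i\ge 0$, the strict inequalities $a_1<\cdots<a_n$ become exactly $0\le b_1\le b_2\le\cdots\le b_n$, that is, an arbitrary partition into at most $n$ parts, while $\sum_i a_i=\tfrac{n(n+1)}{2}+2\sum_i b_i$. Hence the non-overlined block of size $n$ contributes
\begin{equation}
\frac{q^{n(n+1)/2}}{(q^2;q^2)_n}.
\end{equation}
The overlined parts are automatically distinct and, by hypothesis, at most $n$; with no further interaction imposed they range freely over subsets of $\{1,2,\dots,n\}$, contributing the independent factor $(-q;q)_n$.

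Multiplying the blocks and summing over $n$ gives
\begin{equation}
\sum_N \overline{E}(N)q^N=\sum_{n\ge 0}\frac{q^{n(n+1)/2}\,(-q;q)_n}{(q^2;q^2)_n}.
\end{equation}
Next I would apply the factorization $(q^2;q^2)_n=(q;q)_n(-q;q)_n$ to cancel the $(-q;q)_n$, reducing the right-hand side to $\sum_{n\ge 0} q^{n(n+1)/2}/(q;q)_n$. Finally, specializing Euler's identity \eqref{euler} at $a=q$ (the case $k=1$ that already produced \eqref{23}) turns this sum into $(-q;q)_\infty=\sum_N D(N)q^N$, and comparing coefficients yields $D(n)=\overline{E}(n)$. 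Alternatively one can finish without invoking \eqref{euler} at all, since $q^{n(n+1)/2}/(q;q)_n$ is visibly the generating function for partitions into exactly $n$ distinct parts, whose sum over $n$ is $D(N)$.

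The main obstacle is the second step: correctly extracting $q^{n(n+1)/2}/(q^2;q^2)_n$ from the alternating-parity condition, since this is where the combinatorial content sits and an off-by-one in the parity bookkeeping would spoil the cancellation against $(-q;q)_n$. Everything afterward is the same formal manipulation as in the Proposition. I would also note that the cancellation establishes a refined statement—for each fixed $n$, the admissible overpartitions with exactly $n$ non-overlined parts are equinumerous with the partitions into exactly $n$ distinct parts—which is exactly the bookkeeping a bijective proof (as promised in the introduction) should realize, and I would look for such a correspondence by matching the common generating function $q^{n(n+1)/2}/(q;q)_n$ on the two sides.
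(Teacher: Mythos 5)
Your proof is correct and is essentially the paper's own argument: both rest on the identity $\frac{q^{n(n+1)/2}}{(q;q)_n}=\frac{q^{n(n+1)/2}(-q;q)_n}{(q^2;q^2)_n}$, with the two sides interpreted as generating functions for distinct-part partitions and for the overpartitions counted by $\overline{E}(n)$, respectively. You merely run the chain of equalities in the opposite direction and spell out the combinatorial bookkeeping (the substitution $a_i=i+2b_i$) that the paper leaves implicit.
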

\begin{proof}
   \begin{equation}
       \sum_{n=0}^\infty D(n)q^n=\sum_{n=0}^\infty \frac{ q^{n(n+1)/2} }{(q;q)_{n}}=\sum_{n=0}^\infty \frac{ q^{1+2+\cdots+n} (-q;q)_n }{(q^2;q^2)_{n}}=\sum_{n=0}^\infty \overline{E}(n)q^n
   \end{equation}
\end{proof}

The bijective proof will use the following lemma:
\begin{lemma}\label{lembit}
Let $B=(b_1,b_2,\ldots)$ be a nonempty binary sequence. There is a unique partition of least weight associated with $B$ which is gapfree with smallest part 1 (ignoring a possible initial string of 0's).
\end{lemma}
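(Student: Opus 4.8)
The plan is to make the correspondence explicit and then run a short greedy/exchange argument. I read ``the partition associated with $B$'' in the natural way: writing the parts of a partition in weakly increasing order $\mu_1\le\mu_2\le\cdots\le\mu_t$ and setting $\mu_0:=0$, the associated binary sequence records a $1$ in position $j$ when $\mu_j>\mu_{j-1}$ (a strictly larger part size appears) and a $0$ when $\mu_j=\mu_{j-1}$ (a repeat). Since $\mu_1\ge 1>\mu_0$, this reading always begins with a $1$, so a sequence that starts with a block of $0$'s corresponds to no partition unless those $0$'s are discarded; this is exactly the ``ignoring a possible initial string of $0$'s'' clause. I would therefore first strip any leading $0$'s from $B$, after which $b_1=1$, and treat the reduced sequence throughout.

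First I would exhibit a canonical partition $\lambda^{\ast}$ associated with $B$ and verify its shape. Define it by $\mu_j=\#\{\,i\le j: b_i=1\,\}$; equivalently, start the smallest part at $1$ when the first $1$ is read, increase the current part by exactly $1$ at every subsequent $1$, and repeat the current part at every $0$. By construction the distinct part sizes occurring in $\lambda^{\ast}$ are precisely $1,2,\dots,k$, where $k$ is the total number of $1$'s in $B$, so $\lambda^{\ast}$ is gapfree with smallest part $1$; and its jump pattern is visibly $B$, so it is associated with $B$. Its weight is $\sum_{j}\mu_j=\sum_{j}\#\{\,i\le j: b_i=1\,\}$.

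Next I would prove minimality and uniqueness together by a telescoping/exchange argument. Let $\lambda$ be any partition associated with $B$, with increasing parts $\nu_1\le\cdots\le\nu_t$; the length $t$ is forced to equal the length of the reduced $B$, since each position contributes exactly one part. Each $0$ forces $\nu_j=\nu_{j-1}$ and each $1$ forces a strict increase $\nu_j\ge\nu_{j-1}+1$, while $\nu_1\ge 1$. A short induction then gives $\nu_j\ge\mu_j$ for every $j$, hence $\sum_j\nu_j\ge\sum_j\mu_j$, with equality if and only if $\nu_1=1$ and every $1$-step is an increase by exactly $1$ — that is, if and only if $\lambda=\lambda^{\ast}$. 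This simultaneously shows that $\lambda^{\ast}$ has least weight and is the unique minimizer, and it identifies the equality case with the gapfree/smallest-part-$1$ condition.

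The main obstacle is not the inequality but pinning the correspondence down so that the equality case of the telescoping bound coincides exactly with ``gapfree with smallest part $1$,'' and handling the normalization cleanly: I must confirm that stripping leading $0$'s is the correct reduction (so the statement concerns nonempty $B$ up to leading $0$'s) and that the number of parts is forced to match the length of the reduced sequence, so that no competitor of different length can undercut the weight. Once the equality condition is matched with the gapfree/smallest-part-$1$ condition, both existence-of-the-minimizer and uniqueness follow immediately.
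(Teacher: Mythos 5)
Your telescoping/exchange argument is internally sound, but it is applied to the wrong correspondence, so it constructs a different partition than the one the lemma is about. You read ``associated with $B$'' as ``has jump pattern $B$'' (a $1$ where a strictly larger part appears, a $0$ at a repeat). In the paper the association is termwise congruence mod $2$: $t(B)=(t_1,t_2,\ldots)$ must satisfy $t_j\equiv b_j \pmod 2$ for every $j$, and the paper's recursion $t_1=b_1$, $t_j=t_{j-1}$ if $b_j=b_{j-1}$, $t_j=t_{j-1}+1$ if $b_j\neq b_{j-1}$ is precisely the greedy (hence least-weight) weakly increasing sequence with those prescribed parities. The difference is not cosmetic: for the paper's own example $B=(0,1,1,0,1,0,0)$, the paper gets $t(B)=(0,1,1,2,3,4,4)$ (weight $15$), while your recipe $\mu_j=\#\{i\le j:b_i=1\}$ on the stripped sequence gives $(1,2,2,3,3,3)$ (weight $14$) --- a different partition that is not congruent to $B$ termwise.

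The mod-$2$ reading is forced by how Lemma \ref{lembit} is used in the bijective proof of Theorem \ref{D}: there one sets $A=(\lambda-B_k)\bmod 2$ and relies on ``$A\equiv t(A)$ (mod 2) termwise, by construction,'' together with $t_k\le k$, to conclude that the non-overlined parts $\lambda-t(A)$ are distinct with alternating parities. Your object breaks that argument: for $\lambda=(1,2,4,5,13,14)$ and $A=(0,0,1,1,0,0)$, your construction yields $t(A)=(0,0,1,2,2,2)$, so $\lambda-t(A)=(1,2,3,3,11,12)$, which has a repeated part and fails parity alternation, whereas the paper's $t(A)=(0,0,1,1,2,2)$ gives $(1,2,3,4,11,12)\in\overline{E}[39]$. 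The repair is modest and keeps your proof structure: run the same greedy/induction bound over weakly increasing sequences $s$ with $s_j\equiv b_j\pmod 2$, showing $s_j\ge t_j$ (when $b_j\neq b_{j-1}$ the parity constraint forces $s_j\ge t_{j-1}+1$); this yields minimality, uniqueness, and the gapfree/smallest-part-$1$ property of the paper's $t(B)$ in one stroke --- and, so amended, your write-up would actually justify more than the paper's own proof, which only exhibits the construction without verifying least weight or uniqueness.
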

\begin{proof}
The partition, denoted by $t(B)=(t_1,t_2,\ldots)$, may be obtained as follows.
Set $t_1=b_1$, then for any index $j>1$, 
\begin{equation}\label{eqbit}
t_j =\begin{cases} t_{j-1}& \text{if}\ b_j=b_{j-1}\\
t_{j-1}+1 & \text{if}\ b_j\neq b_{j-1}.
\end{cases}
\end{equation}
For example, if $B=(0,1,1,0,1,0,0)$ then using \eqref{eqbit} we obtain $t(B)=(0,1,1,2,3,4,4)$.
\end{proof}

\begin{proof}[Bijective Proof of Theorem \ref{D}] 
Let $\overline{E}[n]$ and $D[n]$ denote the corresponding sets of overpartitions and ordinary partitions of $n$ respectively.
We describe a map $f: D[n]\rightarrow \overline{E}[n]$.
Let $\lambda\in D[n]$ where $\lambda = (\lambda_1,\lambda_2,\ldots,\lambda_k)$ with $1\leq \lambda_1<\lambda_2<\cdots <\lambda_k$.
Then
\begin{equation}\label{eqmap}
f(\lambda)=\begin{cases} \lambda& \text{if $\lambda_1$ is odd and $\lambda_i\not\equiv \lambda_{i+1}$ (mod 2)}\ \forall\, i\\
\gamma & \text{otherwise},
\end{cases}
\end{equation}
where $\gamma$ is an overpartition of $n$ obtained as follows. Let $B_k$ denote the parity-alternating binary sequence of length $k$ with first term 1, $B_k=(1,0,1,0,\ldots)$.
\begin{itemize}
\item[(i)] Set $A :=(\lambda-B_k)$ (mod 2);
\item[(ii)] Use Lemma \ref{lembit} to obtain $t(A)$ and denote its conjugate by $t(A)'$;
\item[(iii)] Set $\gamma$ to be the overpartition whose non-overlined parts consist of $\lambda-t(A)$ and whose overlined parts consist of $t(A)'$.  
\end{itemize}

Conversely, let $\gamma\in \overline{E}[n]$. If $\gamma$ has no overlined parts then $f^{-1}(\gamma)=\gamma$. Otherwise denote the sets of the non-overlined and overlined parts of $\gamma$ by $U$ and $V$ respectively. Then $f^{-1}(\gamma) = U+V'$, where $V'$ may be padded with initial 0's if necessary.

It is clear that $A\equiv t(A)$ (mod 2) termwise, by construction. Thus if $\gamma=(\gamma_1,\ldots,\ldots,\gamma_k)$ and $t(A)=(t_1,\ldots,\ldots,t_k)$, then for any index $j$, we have $\gamma_j\equiv \lambda_j-t_j\equiv \lambda_j-(\lambda_j-(j\, \text{mod 2}))\equiv j$ (mod 2). Since $t_k\leq k$ and $t(A)$ is gapfree with 1, the conjugate $t(A)'$ has distinct parts with largest part at most $k$. Thus in step (iii), $\gamma\in \overline{E}[n]$. So $f$ is a bijection. 

\vskip 5pt
For example, consider $\lambda=(1, 2, 4, 5, 13, 14)\in D[39]$. Then $A=\lambda-(1,0,1,0,1,0)\equiv (0,0,1,1,0,0)$ (mod 2). So $t(A)=(0,0,1,1,2,2)$, $t(A)'=(1,1,2,2)'=(2,4)$ and $\lambda-t(A) = (1, 2, 3, 4, 11, 12)$. Hence $f(\lambda)=(1, 2, 3, 4, 11, 12,\overline{2},\overline{4})\in \overline{E}[39]$.

\end{proof}

\section{The Rogers--Ramanujan Identities}
Let $RR_1(n)$ be the number of partitions of $n$ in which the difference between parts is at least $2$.
The partition theoretic form of the first Rogers--Ramanujan identity states that $RR_1(n)$
equals the number of partitions of $n$ into parts congruent to $1$ or $4$ modulo 
$5$~\cite[p. 109, Cor. 7.6]{Andrews1976}, for all integers $n$.
\begin{theorem}\label{FRR}
Let $\overline{RR_1}(n)$ be the number of overpartitions of $n$ in which the non-overlined parts are odd and distinct and overlined parts are at most the number of non-overlined parts. Then 
      $ RR_1(n)=\overline{RR_1}(n).$
\end{theorem}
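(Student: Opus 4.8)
The plan is to prove the identity at the level of generating functions, in the same spirit as the proofs of the Proposition and of Theorem~\ref{D}. The engine is the analytic first Rogers--Ramanujan identity together with the elementary factorization $(q^2;q^2)_m = (q;q)_m(-q;q)_m$.

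First I would record the generating function for the left-hand side. A partition counted by $RR_1(n)$ with exactly $m$ parts is obtained from the minimal difference-$2$ partition $(2m-1, 2m-3, \dots, 3, 1)$, of weight $m^2$, by adjoining an arbitrary partition into at most $m$ parts; hence
\begin{equation}\label{lhsplan}
\sum_{n\geq 0} RR_1(n) q^n = \sum_{m\geq 0} \frac{q^{m^2}}{(q;q)_m}.
\end{equation}
This is just the classical sum side, so no work is really needed here beyond citing it.

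Next I would compute the generating function for $\overline{RR_1}(n)$ by sorting the overpartitions according to the number $m$ of non-overlined parts. The non-overlined parts are $m$ distinct odd positive integers; writing each as $2c_i-1$ with $c_1<\cdots<c_m$ distinct positive integers, their total weight is $2\sum_i c_i - m$, and since $\sum_i c_i$ ranges over all $m$-element subsets of the positive integers, their contribution is $q^{-m}\cdot q^{2\binom{m+1}{2}}/(q^2;q^2)_m = q^{m^2}/(q^2;q^2)_m$. The overlined parts are distinct (each magnitude is overlined at most once, by the definition of an overpartition) and each is at most $m$, so they range over all subsets of $\{1,2,\dots,m\}$ and contribute $(-q;q)_m$. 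Multiplying and summing over $m$ gives
\begin{equation}\label{rhsplan}
\sum_{n\geq 0} \overline{RR_1}(n) q^n = \sum_{m\geq 0} \frac{q^{m^2}(-q;q)_m}{(q^2;q^2)_m}.
\end{equation}

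Finally, I would invoke $(q^2;q^2)_m = (q;q)_m(-q;q)_m$, which collapses the summand of~\eqref{rhsplan} to $q^{m^2}/(q;q)_m$, so that \eqref{lhsplan} and \eqref{rhsplan} agree term by term and the theorem follows. The algebra is trivial; the only genuinely substantive step is the correct reading of the combinatorial constraints defining $\overline{RR_1}(n)$ as generating functions --- in particular recognizing that ``$m$ distinct odd parts'' produces the Rogers--Ramanujan exponent $q^{m^2}$ (now over base $q^2$), and that ``overlined parts at most the number of non-overlined parts'' is exactly what supplies the finite factor $(-q;q)_m$. I expect any difficulty to lie here rather than in the algebra. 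I would also note that the termwise matching of the two $m$-indexed sums strongly suggests a weight- and part-count-preserving bijection, fixing $m$ throughout, which is presumably the route the authors take in their accompanying bijective proof.
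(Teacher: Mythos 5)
Your proposal is correct and is essentially identical to the paper's generating function proof: both identify $\sum_{m\geq 0} q^{m^2}(-q;q)_m/(q^2;q^2)_m$ as the generating function for $\overline{RR_1}(n)$ and collapse it to the classical Rogers--Ramanujan sum side via $(q^2;q^2)_m=(q;q)_m(-q;q)_m$. The paper additionally supplies a separate bijective proof (the map $h$ subtracting $2\ell^{oe}_j$ or $2\ell^{oe}_j+1$ from each part and conjugating the excess), but that is beyond what your generating-function argument needed.
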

\begin{proof}[Generating function proof]
   \begin{equation}
       \sum_{n=0}^\infty RR_1(n)q^n=\sum_{n=0}^\infty \frac{ q^{n^2} }{(q;q)_{n}}=\sum_{n=0}^\infty \frac{ q^{n^2} (-q;q)_n }{(q^2;q^2)_{n}}=\sum_{n=0}^\infty \overline{RR_1}(n)q^n
   \end{equation}
\end{proof}
\begin{proof}[Bijective proof]
Let $RR_1[n] $ and $ \overline{RR_1}[n]$ denote the sets of partitions enumerated by $RR_1(n) $ and $\overline{RR_1}(n)$. We give a map $h: RR_1[n]\rightarrow \overline{RR_1}[n]$. Let $\lambda=(\lambda_1,\ldots,\lambda_k)\in RR_1[n]$, where $\lambda_1>\ldots>\lambda_k$, and assume $h(\lambda)=\pi\in \overline{RR_1}[n]$ such that  $\pi=(\pi_1,\ldots,\pi_k,\overline{v_1},\ldots,\overline{v_r})\in \overline{RR_1}[n]$ with $\pi_1>\cdots>\pi_k$ and $\overline{v_1}>\cdots>\overline{v_r}$. 

Then for each $\lambda_j$, define $\ell^{oe}_j$ as the number of occurrences of pairs of adjacent parts $\lambda_i,\lambda_{i+1}$ that are odd and even respectively, such that $\lambda_j\geq \lambda_i$. Then
\begin{equation}\label{eqoe}
\pi_j  := \begin{cases}
  \lambda_j-2\ell^{oe}_j & \mbox{ if $ \text{ $\lambda_j$ is odd} $}\\
  \lambda_j-2\ell^{oe}_j-1 & \mbox{ if \text{ $\lambda_j$ is even}}.
  \end{cases}
\end{equation}
Now define the partition $v^*=(v^*_1,\ldots,v^*_k)$, where $v^*_j=\lambda_j-\pi_j$ for all $j$. The overlined parts of $\pi$ consist of the conjugate $(v^*)'$, say  $(v_1,\ldots,v_r)$. 
\medskip
  
Conversely, let $\pi\in \overline{RR_1}[n]$ and denote the sets of the non-overlined and overlined parts of $\pi$ by $U$ and $V$ respectively. Then $h^{-1}(\pi) = U+V'$ (cf. $f^{-1}$ in the proof of Theorem \ref{D}).

Since $\lambda_j-\lambda_{j+1}>1$, the definition \eqref{eqoe} insures that non-overlined parts of $\pi$ are odd with $\pi_j-\pi_{j+1}\geq 2$ for all $j$. Hence the map $h$ is well-defined.
\end{proof}

\begin{example}
Let $\lambda=(20,18,15,13,10,7,4,1)\in RR_1[88]$. Then $\ell^{oe}_1=2, \ell^{oe}_2=2, \ell^{oe}_3=2, \ell^{oe}_4=2, \ell^{oe}_5=1,\ell^{oe}_6=1,\ell^{oe}_7=0$ and $\ell^{oe}_8=0$. So the non-overlined parts of $h(\lambda)=\pi$ are $(20-2(2)-1,18-2(2)-1,\ldots ) = (15,13,11,9,7,5,3,1)$. Thus $v^*=(5,5,4,4,3,2,1,0)$ with $(v^*)'=(7,6,5,4,2)$. Therefore,  $\pi=(15,13,11,9,7,5,3,1,\overline{7},\overline{6},\overline{5},\overline{4},\overline{2}).$
\end{example}
 
\begin{theorem}\label{FRR2}
Let $\overline{RR^*_1}(n)$ denote the number of overpartitions of $n$ in which non-overlined parts are even and distinct and overlined parts are at most one plus the number of non-overlined parts. Then
     \begin{equation}
       RR_1(n)=\overline{RR^*_1}(n)
   \end{equation}  
  \end{theorem}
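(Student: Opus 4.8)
The plan is to mirror the generating-function argument behind Theorem~\ref{FRR}, the only new feature being the extra room granted by the bound ``at most one plus the number of non-overlined parts.'' First I would sort the overpartitions counted by $\overline{RR^*_1}(n)$ by the number $m$ of their non-overlined parts. A set of $m$ distinct even parts has least sum $2+4+\cdots+2m=m(m+1)$, and substituting $q\mapsto q^2$ in the generating function for $m$ distinct positive parts gives the factor $q^{m(m+1)}/(q^2;q^2)_m$. The overlined parts are automatically distinct and are capped by $m+1$, so they run over the subsets of $\{1,2,\ldots,m+1\}$ and contribute $(-q;q)_{m+1}$. Hence
\begin{equation}
\sum_{n=0}^\infty \overline{RR^*_1}(n)\,q^n=\sum_{m=0}^\infty \frac{q^{m(m+1)}\,(-q;q)_{m+1}}{(q^2;q^2)_m}.
\end{equation}

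Next I would collapse the summand with the same factorization $(q;q)_m(-q;q)_m=(q^2;q^2)_m$ used in Theorem~\ref{FRR}. Peeling off one factor, $(-q;q)_{m+1}=(1+q^{m+1})(-q;q)_m$, turns $(-q;q)_{m+1}/(q^2;q^2)_m$ into $(1+q^{m+1})/(q;q)_m$, so the right-hand side becomes $\sum_{m\geq 0}q^{m(m+1)}(1+q^{m+1})/(q;q)_m$. Distributing $1+q^{m+1}$ produces two series, one with exponent $m^2+m$ and one with exponent $m^2+2m+1=(m+1)^2$.

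The decisive step is to reindex and telescope. Replacing $m$ by $m-1$ in the second series rewrites it as $\sum_{m\geq 1}q^{m^2}/(q;q)_{m-1}$, and the relation $(q;q)_{m-1}=(q;q)_m/(1-q^m)$ expands this to $\sum_{m\geq 1}q^{m^2}/(q;q)_m-\sum_{m\geq 1}q^{m^2+m}/(q;q)_m$. Adding the first series back, the two copies of $\sum q^{m^2+m}/(q;q)_m$ cancel apart from the $m=0$ term of the first series, which equals $1$; combining that $1$ with $\sum_{m\geq 1}q^{m^2}/(q;q)_m$ restores $\sum_{m\geq 0}q^{m^2}/(q;q)_m$, which is precisely $\sum_n RR_1(n)q^n$. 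Comparing series gives the theorem.

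I expect the only delicate point to be the bookkeeping in this last cancellation—tracking the index shift and the $m=0$ boundary term—rather than any structural difficulty. In keeping with Theorems~\ref{D} and~\ref{FRR}, I would further anticipate a companion bijection $RR_1[n]\to\overline{RR^*_1}[n]$ built on the same template: subtract from each part a correction that forces the non-overlined parts to be even (instead of odd) while preserving the gap condition, record those corrections as a partition whose conjugate supplies the overlined parts, and invert by the $U+V'$ rule of the proof of Theorem~\ref{D}; here the admissible overlined values reach $m+1$ precisely because the even normalization opens up one additional unit of room compared with the odd case.
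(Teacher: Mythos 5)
Your proof is correct, and its skeleton matches the paper's: both pass through the same two-series decomposition $\sum_{m\geq 0} q^{m^2+m}(1+q^{m+1})/(q;q)_m$, and both use the factorization $(q^2;q^2)_m=(q;q)_m(-q;q)_m$ together with $(-q;q)_{m+1}=(1+q^{m+1})(-q;q)_m$ to convert that expression into the overpartition generating function. The one genuine difference is the direction of travel and the justification of the decisive link to $RR_1$. The paper starts from the partition side and splits combinatorially: a partition with gaps at least $2$ either has no part equal to $1$ (contributing $q^{n^2+n}/(q;q)_n$) or contains a $1$ (remove it; what remains is $n$ parts $\geq 3$ with gaps $\geq 2$, contributing $q^{n^2+2n+1}/(q;q)_n$), so the identity
\begin{equation*}
\sum_{n\geq 0}\frac{q^{n^2}}{(q;q)_n}=\sum_{n\geq 0}\frac{q^{n^2+n}}{(q;q)_n}+\sum_{n\geq 0}\frac{q^{(n+1)^2}}{(q;q)_n}
\end{equation*}
requires no algebraic verification at all. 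You instead start from the overpartition side and prove this same identity formally, by reindexing $m\mapsto m-1$ and telescoping with $(q;q)_{m-1}=(q;q)_m/(1-q^m)$; your bookkeeping (cancellation of the two copies of $\sum_{m\geq 1}q^{m^2+m}/(q;q)_m$, with the $m=0$ term surviving) is accurate. The paper's combinatorial splitting is shorter and explains \emph{why} the decomposition holds; your telescoping is self-contained and never needs to recognize the two summands as generating functions for restricted gap-$2$ partitions. Your closing remark about a companion bijection also matches the paper, which adapts the map $h$ of Theorem \ref{FRR} by replacing the odd--even adjacent-pair counts $\ell^{oe}_j$ with even--odd counts $\ell^{eo}_j$.
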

\begin{proof}[Generating function proof]
It is clear that the generating function for the number of partitions of $n$ in which the difference between parts is at least $2$ can be separated into two terms: the first enumerating 
such partitions that contain a $1$ as part and the second enumerating such partitions
that do not contain a $1$ as a part.


Therefore,
   \begin{align}
   \sum_{n=0}^\infty RR_1(n)q^n=\sum_{n=0}^\infty \frac{ q^{n^2+n} }{(q;q)_{n}}+\sum_{n=0}^\infty \frac{ q^{n^2+2n+1} }{(q;q)_{n}}&=\sum_{n=0}^\infty \frac{ q^{n^2+n} }{(q;q)_{n}}(1+q^{n+1})\\ &=\sum_{n=0}^\infty \frac{ q^{n^2+n} (-q;q)_n}{(q^2;q^2)_{n}}(1+q^{n+1})\\ &=\sum_{n=0}^\infty \frac{ q^{n^2+n} (-q;q)_{n+1}}{(q^2;q^2)_{n}}=\sum_{n=0}^\infty \overline{RR^*_1}(n)q^n
\end{align} 
\end{proof}

\begin{proof}[Bijective Proof]
This may be proved using the bijection $h$ as in the proof of Theorem \ref{FRR}. We only need to replace odd-even pairs with even-odd pairs of adjacent parts, and correspondingly define
\[ \pi_j  := \begin{cases}
  \lambda_j-2\ell^{eo}_j & \mbox{ if $ \text{ $\lambda_j$ is even} $}\\
  \lambda_j-2\ell^{eo}_j-1 & \mbox{ if \text{ $\lambda_j$ is odd}}.
  \end{cases}
  \]
\end{proof}

\begin{example}
 Let $\lambda=(20,18,15,13,10,7,4,1)\in RR_1[88]$. Then $\ell^{eo}_1=3, \ell^{eo}_2=3, \ell^{eo}_3=2, \ell^{eo}_4=2,  \ell^{eo}_5=2,\ell^{eo}_6=1,\ell^{eo}_7=1$ and $\ell^{oe}_8=0$. So the non-overlined parts are of $\pi$ are $(20-2(3),18-2(3),\ldots ) = (14,12,10,8,6,4,2,0)$. So $v^*=(6,6,5,5,4,3,2,1)$ with $(v^*)'=(8,7,6,5,4,2)$. Hence  $\pi=(14,12,10,8,6,4,2,\overline{8},\overline{7},\overline{6},\overline{5},\overline{4},\overline{2}).$
\end{example}

Let $RR_2(n)$ denotes the number of partitions of $n$ into parts $>1$ and the difference between parts is at least $2$.  The second Rogers--Ramanujan identity states that
for all integers $n$,
$RR_2(n)$ equals the number of partitions of $n$ into parts congruent to $2$ or $3$
modulo $5$~\cite[p. 109, Cor. 7.7]{Andrews1976}.
\begin{theorem}\label{SRR}
 Let $\overline{RR_2}(n)$ denote the number of overpartitions of $n$ in which the non-overlined parts are even and distinct and overlined parts are at most 
 the number of non-overlined parts. Then 
      $ RR_2(n)=\overline{RR_2}(n).$
\end{theorem}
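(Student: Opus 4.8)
The plan is to mirror the two-part treatment of Theorem~\ref{FRR} almost verbatim, replacing the first Rogers--Ramanujan series by its second companion and tracking the resulting shift in parity. For the generating function proof, the starting point is the analytic form of the second Rogers--Ramanujan identity,
\[
\sum_{n=0}^\infty RR_2(n)q^n=\sum_{n=0}^\infty \frac{q^{n^2+n}}{(q;q)_n},
\]
which records that a partition counted by $RR_2(n)$ with exactly $n$ parts is assembled from the minimal admissible partition $2+4+\cdots+2n=n^2+n$ together with an arbitrary partition into at most $n$ parts; this is precisely the first summand already appearing in the generating function proof of Theorem~\ref{FRR2}. Applying the factorization $(q;q)_n(-q;q)_n=(q^2;q^2)_n$ used in Theorem~\ref{FRR}, I would then rewrite
\[
\sum_{n=0}^\infty \frac{q^{n^2+n}}{(q;q)_n}=\sum_{n=0}^\infty \frac{q^{n^2+n}(-q;q)_n}{(q^2;q^2)_n}.
\]

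The combinatorial reading of the last summand is where the statement is actually proved. Since $q^{n^2+n}=q^{2+4+\cdots+2n}$, the factor $q^{n^2+n}/(q^2;q^2)_n$ is the generating function for partitions into exactly $n$ distinct even parts, and these become the non-overlined parts of the image overpartition. The remaining factor $(-q;q)_n=\prod_{j=1}^n(1+q^j)$ is the generating function for partitions into distinct parts each at most $n$, and these become the overlined parts, which are therefore distinct and bounded by $n$, the number of non-overlined parts. Summing over $n$ yields $\sum_{n=0}^\infty \overline{RR_2}(n)q^n$, completing the generating function proof.

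For the bijective proof I would invoke the map already constructed for Theorem~\ref{FRR2}, restricted to the subset $RR_2[n]\subseteq RR_1[n]$ of those $\lambda=(\lambda_1,\dots,\lambda_k)$, $\lambda_1>\cdots>\lambda_k$, having no part equal to $1$. For such $\lambda$ the even-parity rule
\[
\pi_j:=\begin{cases}\lambda_j-2\ell^{eo}_j & \text{if }\lambda_j\text{ is even},\\ \lambda_j-2\ell^{eo}_j-1 & \text{if }\lambda_j\text{ is odd},\end{cases}
\]
produces $k$ distinct even non-overlined parts satisfying $\pi_j-\pi_{j+1}\geq 2$, and the overlined parts are read off as the conjugate of $v^*=(\lambda_1-\pi_1,\dots,\lambda_k-\pi_k)$, with inverse $U+V'$ exactly as before.

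The only point requiring genuine attention --- and the step I expect to be the main obstacle --- is the behaviour at the smallest part, which is exactly what separates this theorem from Theorem~\ref{FRR2}. For the smallest part one has $\ell^{eo}_k=0$, so $\pi_k=\lambda_k$ or $\pi_k=\lambda_k-1$ according to parity; since $\lambda_k\geq 2$, in either case $\pi_k\geq 2$, whence by the strict decrease of the $\pi_j$ no image part collapses to $0$ and all $k$ non-overlined parts survive. The largest overlined part, namely the number of nonzero entries of $v^*$, is therefore at most $k$, the number of non-overlined parts; this is precisely the sharpening from ``one plus the number of non-overlined parts'' in Theorem~\ref{FRR2} to ``the number of non-overlined parts'' here. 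Conversely, any $\pi\in\overline{RR_2}[n]$ has smallest non-overlined part at least $2$, so its preimage $U+V'$ contains no part equal to $1$ and lies in $RR_2[n]$; hence the Theorem~\ref{FRR2} bijection restricts to a bijection between $RR_2[n]$ and $\overline{RR_2}[n]$. The remaining verifications --- that $v^*$ is a partition with distinct conjugate and that the construction inverts --- run word for word as in Theorem~\ref{FRR}, so I would cite that argument rather than reproduce it.
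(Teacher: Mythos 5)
Your proposal is correct and follows essentially the same route as the paper: the identical generating-function chain $\sum q^{n^2+n}/(q;q)_n=\sum q^{n^2+n}(-q;q)_n/(q^2;q^2)_n$ read combinatorially term by term, and the bijection of Theorem~\ref{FRR2} restricted to partitions with no part equal to $1$, which is exactly what the paper invokes. Your explicit check that $\lambda_k\geq 2$ forces $\pi_k\geq 2$, so that no non-overlined part collapses to $0$ and the overlined bound tightens from $k+1$ to $k$, is a detail the paper leaves implicit but is the right point to verify.
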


\begin{proof}
   \begin{equation}
       \sum_{n=0}^\infty RR_2(n)q^n=\sum_{n=0}^\infty \frac{ q^{n^2+n} }{(q;q)_{n}}=\sum_{n=0}^\infty \frac{ q^{n^2+n} (-q;q)_n }{(q^2;q^2)_{n}}=\sum_{n=0}^\infty \overline{RR_2}(n)q^n
   \end{equation}
\end{proof}

The same bijection used in the proof of Theorem \ref{FRR2} will suffice to give a bijective proof of Theorem~\ref{SRR}.

\begin{example}
    Let $\lambda=(20,18,15,13,10,7,4)\in RR_2[87]$. Then $\ell^{eo}_1=2, \ell^{eo}_2=2, \ell^{eo}_3=1, \ell^{eo}_4=1, \ell^{eo}_5=1,\ell^{eo}_6=0$ and $\ell^{oe}_7=0$. The non-overlined parts of the image $\pi$ are $(16,14,12,10,8,6,4)$, and $v^*=(4,4,3,3,2,1,0)$ with $(v^*)' = (6,5,4,2)$. Thus\\ $\pi=(16,14,12,10,8,6,4,\overline{6},\overline{5},\overline{4},\overline{2}).$
\end{example}

We next apply a similar reasoning to the one used in the generating function proof of Theorem \ref{FRR2}. 

We write the generating function for partitions into parts greater than $1$ and differing by
at least two as a difference between those partitions into parts differing by at least two 
and those partitions differing by at least two and containing a $1$:

\begin{align}
   \sum_{n=0}^\infty \frac{ q^{n^2} }{(q;q)_{n}}-\sum_{n=0}^\infty \frac{ q^{n^2+2n+1} }{(q;q)_{n}}&=\sum_{n=0}^\infty \frac{ q^{n^2} }{(q;q)_{n}}(1-q^{2n+1})\\ &=\sum_{n=0}^\infty \frac{ q^{n^2} (-q;q)_n}{(q^2;q^2)_{n}}(1-q^{n+1})\\ &=\sum_{n=0}^\infty \frac{ q^{n^2} (-q;q)_{n} (q;q^2)_{n+1}}{(q;q)_{2n}}.
\end{align}

The terms of the series
$$\sum_{n=0}^\infty \frac{ q^{n^2}  (q;q^2)_{n+1}}{(q;q)_{2n}}$$ 
are tabulated in~\cite[A027349]{S1} under the description ``The number of partitions of $n+1$ into distinct odd parts, the least being $1$."\\
Notice that we might write the generating function for the number of partitions of $n$ into distinct odd parts, the least being $1$ as follows:
\begin{align}
   \sum_{n=0}^\infty \frac{ q^{n^2+2n+1} }{(q^2;q^2)_{n}}=\sum_{n=0}^\infty \frac{ q^{n^2+2n+1} (q;q^2)_n }{(q;q)_{2n}}.
\end{align}

Thus $$\sum_{n=0}^\infty \frac{ q^{n^2+2n} (q;q^2)_n }{(q;q)_{2n}}=\sum_{n=0}^\infty \frac{ q^{n^2}  (q;q^2)_{n+1}}{(q;q)_{2n}}.$$

\section{The G\"ollnitz--Gordon Identities}
Let $GG_1(n)$ denote the number of partitions of $n$ where parts mutually differ by $2$ and no consecutive even numbers appear as parts.
The first G\"ollnitz--Gordon identity states that for all integers $n$, $GG_1(n)$ equals the
number of partitions into parts congruent to 1, 4, or 7 modulo 8.~\cite[p. 80, Thm. 2.42]{S17}.

\begin{theorem}\label{FGG}
Let $\overline{GG_1}(n)$ denote the number of overpartions of $n$ where the non-overlined parts are odd and distinct and overlined parts are odd and at most $2$ times the number non-overlined parts minus $1$. Then
     $  GG_1(n)=\overline{GG_1}(n).$
\end{theorem}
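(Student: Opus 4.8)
The plan is to reproduce the generating-function template of Theorems~\ref{FRR} and~\ref{SRR}, this time starting from the analytic first G\"ollnitz--Gordon identity instead of a Rogers--Ramanujan series. First I would record the analytic form
\[
\sum_{n=0}^\infty \frac{q^{n^2}(-q;q^2)_n}{(q^2;q^2)_n} = \frac{1}{(q;q^8)_\infty\,(q^4;q^8)_\infty\,(q^7;q^8)_\infty},
\]
whose right-hand side is the generating function for partitions into parts congruent to $1,4,7 \pmod 8$. Combined with the combinatorial G\"ollnitz--Gordon statement quoted just before the theorem, this gives
\[
\sum_{n\geq 0} GG_1(n)\,q^n = \sum_{n\geq 0} \frac{q^{n^2}(-q;q^2)_n}{(q^2;q^2)_n}.
\]
A pleasant feature here is that, in contrast with the Rogers--Ramanujan cases, no application of the identity $\tfrac{1}{(q;q)_n}=\tfrac{(-q;q)_n}{(q^2;q^2)_n}$ is required: the G\"ollnitz--Gordon series already carries the factor $(-q;q^2)_n$ over the denominator $(q^2;q^2)_n$, so it is in overpartition-ready form from the outset.

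Next I would give the overpartition interpretation of the two factors of the summand. The factor $q^{n^2}/(q^2;q^2)_n$ is the generating function for partitions into exactly $n$ distinct odd parts --- the minimal such partition being $1+3+\cdots+(2n-1)=n^2$, and $1/(q^2;q^2)_n$ accounting for the independent even enlargements that preserve distinctness and oddness --- and I take these to be the non-overlined parts. The factor $(-q;q^2)_n=\prod_{j=1}^n (1+q^{2j-1})$ is the generating function for choosing a subset of $\{1,3,5,\dots,2n-1\}$, which I take to be the overlined parts; by construction these are odd, distinct, and bounded above by $2n-1$, i.e.\ by two times the number of non-overlined parts minus one. Multiplying the two factors and summing over $n$ yields exactly $\sum_{n\geq 0}\overline{GG_1}(n)\,q^n$, and the theorem follows from the displayed chain of equalities.

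The only point I expect to need care is the bookkeeping on the bound for the overlined parts: the index $n$ in the summand must be seen to play two roles simultaneously, counting the $n$ distinct odd non-overlined parts while capping the overlined exponents at $2n-1$, so I would state explicitly that both conditions are forced by the same $n$. A bijective proof, should one be wanted, could be modelled on the map $h$ from Theorem~\ref{FRR}, tracking odd--even adjacencies under the extra G\"ollnitz--Gordon spacing constraint, but the generating-function argument above already settles the identity.
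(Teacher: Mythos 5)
Your proposal is correct and takes essentially the same approach as the paper's generating-function proof: both rest on reading the single series $\sum_{n\geq 0} q^{n^2}(-q;q^2)_n/(q^2;q^2)_n$ in two ways, once as the generating function for $GG_1(n)$ and once, factor by factor ($q^{n^2}/(q^2;q^2)_n$ giving $n$ distinct odd non-overlined parts, $(-q;q^2)_n$ giving overlined odd parts at most $2n-1$), as the generating function for $\overline{GG_1}(n)$. The only cosmetic difference is that you justify the first equality by detouring through the modulus-$8$ product via the analytic G\"ollnitz--Gordon identity, whereas the paper cites the direct combinatorial interpretation of the series in~\cite{S17}; the paper also supplies an additional bijective proof via an explicit map $g$, which your argument does not require.
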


The generating function proof is straightforward as the generating function may be interpreted in two different ways.   For a detailed explanation of the first equality
in~\eqref{gg1} below, see~\cite[pp. 80--83]{S17}.
     \begin{equation} \label{gg1}
       \sum_{n=0}^\infty GG_1(n)q^n=\sum_{n=0}^\infty \frac{ q^{n^2}(-q;q^2)_n }{(q^2;q^2)_{n}}=\sum_{n=0}^\infty \overline{GG_1}(n)q^n
   \end{equation}
   
\begin{proof}[Bijective proof]
Let $GG_1[n]$ and and $\overline{GG_1}[n]$ denote the sets of partitions enumerated by $GG_1(n)$ and $\overline{GG_1}(n)$. We will define a map $\, g:GG_1[n]\rightarrow \overline{GG_1}[n]$. Let $\lambda=(\lambda_1,\lambda_2,\cdots, \lambda_r)\in GG_1[n]$, where $\lambda_1>\cdots >\lambda_r$, and define the parity function 
 \[ T(k)  := \begin{cases}
  0 & \mbox{ if $ \text{ $k$ is odd} $}\\
  1 & \mbox{ if \text{ $k$ is even}}.
  \end{cases}
  \]
  Therefore,
$$(\lambda_1,\cdots, \lambda_r)\mapsto (\tau_1,\cdots, \tau_r,\overline{T(\lambda_1)\cdot 1},\overline{T(\lambda_2)\cdot 3},\cdots , \overline{T(\lambda_r)\cdot (2r-1)})\in \overline{GG_1}[n],$$
where 
\begin{equation}\label{nonoldef}
\tau_j=\lambda_j-\left(T(\lambda_j)+2\sum_{i=j+1}^r T(\lambda_i)\right).
\end{equation}

The inverse map $g^{-1}:\overline{GG_1}[n]\rightarrow GG_1[n] $ runs as follows. 
Any overpartition $\tau\in \overline{GG_1}[n]$ can be written in the form $\tau=(\tau_1,\tau_2,\cdots, \tau_r,\overline{v_1\cdot 1},\overline{v_2\cdot 3},\cdots , \overline{v_r\cdot (2r-1)})$, where $v_k=1$ if $\overline{(2k-1)}\in \tau$ and $v_k=0$ if $\overline{(2k-1)}\notin \tau$. Thus we define 
$$\tau=(\tau_1,\tau_2,\cdots, \tau_r,\overline{v_1\cdot 1},\overline{v_2\cdot 3},\cdots , \overline{v_r\cdot (2r-1)})\rightarrow (\lambda_1,\lambda_2,\cdots, \lambda_r)$$
where 
$$\lambda_j=\tau_j+v_j+2\sum_{i=j+1}^r v_i,\ 1\leq j\leq r.$$

We remark that if $\lambda$ contains two consecutive even parts, say $\lambda_j=2s$ and $\lambda_{j+1}=2s-2$, then using \eqref{nonoldef} we have 
\begin{align*}
\tau_j & = \lambda_j-T(\lambda_j)-2T(\lambda_{j+1})-2(T(\lambda_{j+2}+\cdots+\lambda_r)\\  
& = (2s)-(1)-2(1)-2(T(\lambda_{j+2}+\cdots+\lambda_r)\\
&=\tau_{j+1}.
\end{align*}
That is, the non-overlined parts of $\tau$ will not be distinct. So the map $g$ is well defined.
\end{proof}

\begin{example}
    Let $\lambda=(20,17,15,12,9,7,4,1)\in GG_1[85]$ then $\tau_1=20-(1=2(2))=15, \tau_2=17-(0+2(2)), \tau_3=15-(0+2(2))=13, \tau_4=12-(1+2(1))=9, \tau_5=9-(0+2(1))=7,\tau_6=7-(0+2(1))=5,\tau_7=4-(1+2(0))=3$ and $\tau_8=1-(0+2(0))=1$. So the non-overlined parts are $(15,13,11,9,7,5,3,1)$ and, since the indices of even parts in $\lambda$ are $1,4,7$, the overlined parts are $(1,7,13)$. Thus $\tau=(15,13,11,9,7,5,3,1,\overline{1},\overline{7},\overline{13}).$
\end{example}
\medskip

Let $GG_2(n)$ be the number of partitions of $n$ in which parts mutually differ by $2$, no part less than $3$ and no consecutive even numbers appear as parts.
The second G\"ollnitz--Gordon identity states that for all integers $n$, $GG_1(n)$ equals
the number of partitions of $n$ into parts congruent to 3, 4, or 5 modulo
8~\cite[p. 80, Thm. 2.43]{S17}. 

\begin{theorem}\label{SGG}
Let $\overline{GG_2}(n)$ denote the number of overpartions of $n$ where the non-overlined parts are odd, distinct and greater than $1$ and overlined parts are odd and at most $2$ times the number non-overlined parts minus $1$. Then
      $ GG_2(n)=\overline{GG_2}(n).$
\end{theorem}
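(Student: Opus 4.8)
The plan is to prove Theorem~\ref{SGG} in two ways, exactly parallel to the treatment of Theorem~\ref{FGG}: first a short generating function argument, then a bijective argument that reuses the map $g$ verbatim.

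For the generating function proof, the starting point is the analytic form of the second G\"ollnitz--Gordon identity,
\[
\sum_{n=0}^\infty GG_2(n)q^n=\sum_{n=0}^\infty \frac{q^{n^2+2n}(-q;q^2)_n}{(q^2;q^2)_n},
\]
whose first equality is justified exactly as the first equality of~\eqref{gg1} (see~\cite[pp. 80--83]{S17}). I would then read the right-hand side off term by term. Writing $n^2+2n=3+5+\cdots+(2n+1)$, the factor $q^{n^2+2n}/(q^2;q^2)_n$ generates the partitions into exactly $n$ distinct odd parts each exceeding $1$: the minimal such configuration contributes $q^{n^2+2n}$, and $1/(q^2;q^2)_n$ adjoins even increments while preserving distinctness. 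The factor $(-q;q^2)_n=\prod_{k=1}^n(1+q^{2k-1})$ then contributes an arbitrary subset of $\{1,3,\dots,2n-1\}$, to be interpreted as the overlined parts. Thus the $n$-th summand enumerates precisely those overpartitions counted by $\overline{GG_2}(n)$ with $n$ non-overlined parts, and summing over $n$ yields the theorem. The only change from~\eqref{gg1} is the substitution $q^{n^2}\mapsto q^{n^2+2n}$, which is exactly what raises the non-overlined parts above $1$.

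For the bijective proof I would show that the map $g$ of Theorem~\ref{FGG} restricts, without any modification, to a bijection $GG_2[n]\to\overline{GG_2}[n]$. The decisive observation is that the smallest non-overlined part produced by $g$ is $\tau_r=\lambda_r-T(\lambda_r)$, which is odd; hence $\lambda_r\geq 3$ holds if and only if $\tau_r\geq 3$, i.e.\ if and only if the smallest non-overlined part exceeds $1$. All remaining structural facts---that the non-overlined parts are distinct odd with consecutive gaps at least $2$, and that the overlined parts are odd and at most $2r-1$---were already verified for Theorem~\ref{FGG} and are unaffected by the new hypothesis; likewise $g^{-1}$ returns $\overline{GG_2}[n]$ into $GG_2[n]$ because $\lambda_r=\tau_r+v_r\geq\tau_r\geq 3$. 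I do not anticipate a genuine obstacle: the entire content is the observation that the single extra constraint on each side (``no part less than $3$'' versus ``non-overlined parts greater than $1$'') corresponds under $g$. The one point deserving care is that this smallest-part condition be an equivalence in \emph{both} directions of $g$, so that $g$ neither drops nor creates overpartitions; this is exactly the pair of identities $\tau_r=\lambda_r-T(\lambda_r)$ and $\lambda_r=\tau_r+v_r$ noted above.
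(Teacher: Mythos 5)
Your proposal is correct and follows essentially the same route as the paper: the generating function $\sum_{n\geq 0} q^{n^2+2n}(-q;q^2)_n/(q^2;q^2)_n$ read in two ways, plus reuse of the bijection $g$ from Theorem~\ref{FGG}. You in fact supply more detail than the paper does, notably the verification that the smallest-part conditions ($\lambda_r\geq 3$ versus $\tau_r>1$) correspond under $g$ and $g^{-1}$, which the paper leaves implicit.
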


\begin{proof}[Generating function proof]
   The generating function proof is straightforward as one generating function may be interpreted in two different ways:
     \begin{equation}
       \sum_{n=0}^\infty GG_2(n)q^n=\sum_{n=0}^\infty \frac{ q^{n^2+2n}(-q;q^2)_n }{(q^2;q^2)_{n}}=\sum_{n=0}^\infty \overline{GG_2}(n)q^n.
   \end{equation}
  \end{proof}
  
  \begin{proof}[Bijective proof]
   The bijective map $g$ in the proof of Theorem \ref{FGG} may be used to establish this theorem as well.
\end{proof}
\begin{example}
    Let $\lambda=(20,17,15,12,9,7,4)\in GG_1[84]$ then $\tau_1=20-(1=2(2))=15, \tau_2=17-(0+2(2)), \tau_3=15-(0+2(2))=13, \tau_4=12-(1+2(1))=9, \tau_5=9-(0+2(1))=7,\tau_6=7-(0+2(1))=5$ and $\tau_7=4-(1+2(0))=3$. So the non-overlined parts are $(15,13,11,9,7,5,3)$, and the overlined parts are $(1,7,13)$. Thus $\tau=(15,13,11,9,7,5,3,\overline{1},\overline{7},\overline{13}).$
\end{example}
\medskip

\begin{theorem}\label{DGG} Let $DGG_{1,2}(n)$ denote the number of partitions of $n$ into parts that mutually differ by $2$, no consecutive even numbers appear as parts such that one part equals $1$ or $2$. Let $\overline{DGG_{1,2}}(n)$ be the number of overpartitions of $n$ when non-overlined parts are distinct, odd and include $1$, and the overlined parts are odd and at most $2$ times the number of non-overlined parts minus $1$.
Then
    \begin{equation}
       DGG_{1,2}(n)=\overline{DGG_{1,2}}(n).
   \end{equation} 
\end{theorem}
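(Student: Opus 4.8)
The plan is to exhibit both sides of the asserted identity as the same difference of the two quantities already governed by the G\"ollnitz--Gordon Theorems \ref{FGG} and \ref{SGG}. The first ingredient is a set decomposition on the ordinary-partition side. A partition counted by $GG_1(n)$ has parts differing by at least $2$ with no two consecutive even numbers, and since the parts are automatically distinct, its smallest part lies in $\{1,2\}$ or is at least $3$, and these cannot both occur. If the smallest part is $1$ or $2$ the partition is precisely one counted by $DGG_{1,2}(n)$ (``one part equals $1$ or $2$''), while if the smallest part is at least $3$ it is precisely one counted by $GG_2(n)$ (``no part less than $3$''). For $n\ge 1$ these two cases are disjoint and exhaustive; for $n=0$ the empty partition is counted by $GG_1$ and $GG_2$ but not by $DGG_{1,2}$, which is consistent. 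Hence
\begin{equation}
DGG_{1,2}(n)=GG_1(n)-GG_2(n)\qquad\text{for all }n.
\end{equation}

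Next I would record the parallel decomposition on the overpartition side. Every overpartition counted by $\overline{GG_1}(n)$ with $n\ge 1$ must contain at least one non-overlined part, since the overlined parts are constrained to be at most $2r-1$ where $r$ is the number of non-overlined parts, and that bound is vacuous when $r=0$. The non-overlined parts are distinct odd integers, so either $1$ occurs among them---placing the overpartition in $\overline{DGG_{1,2}}(n)$---or every non-overlined part exceeds $1$, placing it in $\overline{GG_2}(n)$. The crucial point is that the condition on the overlined parts (odd and at most $2r-1$) is identical in all three families and depends only on the \emph{count} $r$ of non-overlined parts, not on their values, so this classification leaves the overlined portion untouched. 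Therefore
\begin{equation}
\overline{DGG_{1,2}}(n)=\overline{GG_1}(n)-\overline{GG_2}(n)\qquad\text{for all }n.
\end{equation}

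The theorem then follows immediately by combining the two decompositions with Theorems \ref{FGG} and \ref{SGG}:
\begin{equation}
DGG_{1,2}(n)=GG_1(n)-GG_2(n)=\overline{GG_1}(n)-\overline{GG_2}(n)=\overline{DGG_{1,2}}(n).
\end{equation}
At the level of generating functions this collapses to the single identity
\begin{equation}
\sum_{n\ge 0}DGG_{1,2}(n)q^n=\sum_{n\ge 0}\frac{q^{n^2}(-q;q^2)_n}{(q^2;q^2)_n}\bigl(1-q^{2n}\bigr)=\sum_{n\ge 0}\overline{DGG_{1,2}}(n)q^n,
\end{equation}
the two outer sums being read off from the partition and overpartition sides respectively; one checks that the $n=0$ term of the middle series vanishes, matching $DGG_{1,2}(0)=0$.

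The step I expect to be the main obstacle is the verification of the overpartition decomposition, specifically confirming that separating the overpartitions according to whether $1$ appears as a non-overlined part is fully compatible with the bound $2r-1$ on the overlined parts. Because that bound is phrased in terms of the number $r$ of non-overlined parts rather than their sizes, one must argue that moving between $\overline{GG_1}$, $\overline{GG_2}$, and $\overline{DGG_{1,2}}$ never silently alters $r$; once it is clear that each overpartition is classified purely by the presence or absence of a non-overlined $1$ while $r$ is held fixed, the decomposition is forced and the remaining work is only bookkeeping of the $n=0$ edge case.
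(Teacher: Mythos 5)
Your proof is correct, but it follows a genuinely different route from the paper's. The common ground is the partition-side decomposition $DGG_{1,2}(n)=GG_1(n)-GG_2(n)$, which the paper encodes as a difference of the two G\"ollnitz--Gordon generating functions. After that, however, the paper never invokes Theorems \ref{FGG} or \ref{SGG}: it works entirely with series, absorbing the factor $(1-q^{2n})$ via
\[
\sum_{n\ge 1}\frac{q^{n^2}(-q;q^2)_n}{(q^2;q^2)_n}\bigl(1-q^{2n}\bigr)=\sum_{n\ge 1}\frac{q^{n^2}(-q;q^2)_n}{(q^2;q^2)_{n-1}},
\]
and then interpreting the right-hand series directly: $q^{n^2}/(q^2;q^2)_{n-1}$ generates partitions into $n$ distinct odd parts containing $1$, while $(-q;q^2)_n$ supplies the odd overlined parts bounded by $2n-1$. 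You instead run a second, parallel inclusion--exclusion on the overpartition side, $\overline{DGG_{1,2}}(n)=\overline{GG_1}(n)-\overline{GG_2}(n)$, and finish by quoting Theorems \ref{FGG} and \ref{SGG} as black boxes. Your route buys a purely combinatorial argument with no new $q$-series to interpret---the only substantive verification being, as you correctly isolate, that the bound $2r-1$ on overlined parts depends only on the number $r$ of non-overlined parts, so sorting by the presence or absence of a non-overlined $1$ is legitimate---and it treats the $n=0$ and $r=0$ edge cases more carefully than the paper does. The paper's route buys self-containedness (it needs only the classical series forms for $GG_1$ and $GG_2$, not the overpartition theorems) together with an explicit series representation of $\sum_n\overline{DGG_{1,2}}(n)q^n$, which has independent interest. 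As a bonus, your two decompositions make precise the paper's closing remark that the map $g$ of Theorem \ref{FGG} also proves this identity bijectively: they show exactly that $g$ restricts to a bijection from $DGG_{1,2}[n]$ onto $\overline{DGG_{1,2}}[n]$.
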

\begin{proof}
    \begin{align} \sum_{n=0}^\infty DGG_{1,2}(n) q^n=\sum_{n=0}^\infty \frac{q^{n^2} (-q;q^2)_n}{(q^2;q^2)_n} &- \sum_{n=0}^\infty \frac{q^{n^2+2n} (-q;q^2)_n}{(q^2;q^2)_n} \\ &= \sum_{n=1}^\infty \frac{q^{n^2}(-q;q^2)_n}{(q^2;q^2)_n}( 1 -q^{2n} )\\ &= \sum_{n=1}^\infty \frac{q^{n^2}(-q;q^2)_n}{(q^2;q^2)_{n-1}}\\ 
&= \sum_{n=1}^\infty \overline{DGG_{1,2}}(n) q^n.
\end{align}
For the bijective proof, again the map $g$ would work for this identity. 
\end{proof}

\section{The Little G\"ollnitz Identities}

Let $LG_1(n)$ denotes the number of partitions of $n$ where parts that mutually differ by $2$ and in which no consecutive odd numbers appear as parts.
The first little G\"ollnitz identity states that for all integers $n$, 
$LG_1(n)$ equals the number of partitions of $n$ into distinct parts congruent to 0, 1, or 2
modulo 4 and also equal to the number of partitions of $n$ into parts congruent to
1, 5, or 6 modulo 8~\cite[p. 83, Thm 2.44]{S17}. 

\begin{theorem}\label{LG}
Let $\overline{LG_1}(n)$ denote the number of overpartitions of $n$ in which the 
non-overlined parts are even and distinct and the overlined parts are odd and at most one more than two times the number of non-overlined parts. Then
      $ LG_1(n)=\overline{LG_1}(n)$.
\end{theorem}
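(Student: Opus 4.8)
The plan is to mirror the generating-function proofs of Theorems~\ref{FGG} and~\ref{SGG}, combined with the ``contains a $1$ or not'' splitting already used for Theorems~\ref{FRR2} and~\ref{DGG}. Concretely, I would aim to show
\[
\sum_{n\ge0}LG_1(n)q^n=\sum_{n\ge0}\frac{q^{n^2+n}(-q;q^2)_{n+1}}{(q^2;q^2)_n},
\]
after which the right-hand side reads off directly as $\sum_n\overline{LG_1}(n)q^n$: the factor $q^{n^2+n}/(q^2;q^2)_n$ generates the partitions into exactly $n$ distinct even parts (the non-overlined parts, whose minimal configuration is $2+4+\cdots+2n=n^2+n$), while $(-q;q^2)_{n+1}=\prod_{i=0}^{n}(1+q^{2i+1})$ generates a set of distinct odd overlined parts, each at most $2n+1$, that is, at most one more than twice the number of non-overlined parts.

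To obtain that series I would split the $LG_1$-partitions according to whether $1$ occurs as a part. Subtracting $1$ from every part is a weight-shifting bijection (lowering the weight by the number of parts) that interchanges the parities of all parts, hence turns ``no two consecutive odd parts'' into ``no two consecutive even parts.'' Thus it carries the $LG_1$-partitions with smallest part $\ge 2$ onto the $GG_1$-partitions, and the $LG_1$-partitions with smallest part $\ge4$ onto the $GG_2$-partitions (if $1$ is a part, the next part is forced to be $\ge4$, since a $3$ would create the forbidden consecutive odd pair $3,1$). Since adding $1$ to each of the $r$ parts raises the weight by $r$, the two classes have generating functions obtained from the series in~\eqref{gg1} and in Theorem~\ref{SGG} by multiplying their $r$-part summands by $q^{r}$, namely
\[
\sum_{r\ge0}\frac{q^{r^2+r}(-q;q^2)_r}{(q^2;q^2)_r}\quad\text{and}\quad q\sum_{s\ge0}\frac{q^{s^2+3s}(-q;q^2)_s}{(q^2;q^2)_s}.
\]
Adding these and using $(-q;q^2)_n(1+q^{2n+1})=(-q;q^2)_{n+1}$ collapses the two sums into the target series.

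The one genuinely new ingredient, and the step I would be most careful about, is the claim that the summand indexed by $r$ in the $GG_1$ and $GG_2$ series counts exactly the $r$-part members of $GG_1$ and $GG_2$; this is what legitimizes the per-part factor $q^{r}$. This refinement is implicit in the Durfee-type derivation cited at~\eqref{gg1} (and is visible in the bijection $g$, which preserves the number of non-overlined parts), but I would state it explicitly, since the whole argument hinges on tracking the number of parts through the two shift bijections.

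For a bijective proof I would adapt the map $g$ of Theorem~\ref{FGG} with the roles of odd and even interchanged (exactly as the map $h$ was adapted for Theorem~\ref{FRR2}), so that the non-overlined image parts become even and the odd parts of $\lambda$ are recorded by overlined odd parts. The subtle point is the extra overlined value $2r+1$: a naive parity-swap of $g$ only produces overlined parts up to $2r-1$, whereas $\overline{LG_1}$ permits $2r+1$. That missing slot is precisely the optional factor $1+q^{2n+1}$ above, so in the bijection the overlined part $\overline{2r+1}$ should encode whether $\lambda$ contains the part $1$. Verifying that this decorated map is a well-defined bijection --- in particular that the non-overlined even parts remain distinct, which fails exactly when $\lambda$ has two consecutive odd parts and is therefore excluded by the $LG_1$ condition --- is the main obstacle on the combinatorial side.
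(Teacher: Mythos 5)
Your proof is correct, but it takes a genuinely different route from the paper. The paper's proof is purely analytic: it starts from the known product form $\sum_n LG_1(n)q^n=(-q^2;q^2)_\infty(-q;q^4)_\infty$ (the analytic first little G\"ollnitz identity) and then specializes Lebesgue's identity~\eqref{L1} (with $q\mapsto q^2$, $a=-q$, multiplied by $1+q$) to show that the overpartition series $\sum_n q^{n(n+1)}(-q;q^2)_{n+1}/(q^2;q^2)_n$ equals the same infinite product, so the two enumerants agree because both sides match a third object. You instead establish the key identity
\begin{equation*}
\sum_{n\ge 0}LG_1(n)q^n=\sum_{n\ge 0}\frac{q^{n^2+n}(-q;q^2)_{n+1}}{(q^2;q^2)_n}
\end{equation*}
directly and combinatorially: split the $LG_1$-partitions by whether $1$ is a part, use the shift bijection $\lambda_i\mapsto\lambda_i\pm 1$ (which swaps the parity conditions) to identify the two classes with $GG_1$- and $GG_2$-partitions, import the part-number-refined summands of~\eqref{gg1} and Theorem~\ref{SGG}, and collapse via $(-q;q^2)_n(1+q^{2n+1})=(-q;q^2)_{n+1}$; I checked the bookkeeping ($q^r$ per shifted part, an extra $q$ for the appended $1$, and the forced gap to $\ge 4$ above a part equal to $1$) and it is sound. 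What the paper's route buys is brevity and a direct link to the modulus-$8$ product side; what your route buys is independence from Lebesgue's identity and from the known product evaluation of $\sum_n LG_1(n)q^n$, a refinement by number of parts, and consistency with the splitting technique the paper already uses for Theorems~\ref{FRR2} and~\ref{DGG}. Your one declared debt --- that the $r$-th summand of the $GG_1$ (resp.\ $GG_2$) series counts exactly the $r$-part such partitions --- is indeed the standard refinement implicit in the derivation cited at~\eqref{gg1}, so flagging and stating it explicitly, as you propose, closes the argument. Your remark on the bijective side is also apt: the paper's parity-swapped map $g$ attains the bound $2r+1$ because a non-overlined part can become $0$ and be discarded (dropping the count of non-overlined parts by one), whereas your decoration encodes the part $1$ in the extra overlined slot $\overline{2r+1}$; either mechanism works, but they are not the same map.
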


\begin{proof}
We know that 
\begin{equation}\label{LG1}
    \sum_{n=0}^{\infty}LG_1(n)q^n=\sum_{n=0}^\infty \frac{ q^{n(n+1)} (-q^{-1};q^2)_{n}}{(q^2;q^2)_n}=(-q^2;q^2)_\infty(-q;q^4)_\infty
\end{equation}
   
Recall V.-A. Lebesgue's identity~\cite{L40}
\begin{equation} \label{L1}
L(a;q):=\sum_{n=0}^\infty \frac{ q^{\frac{n(n+1)}{2}} (a;q)_n}{(q;q)_n} = (-q; q )_\infty (aq;q^2)_\infty.
\end{equation}
By replacing $q$ by $q^2$, setting $a:=-q$ and multiplying both sides by $(1+q)$ in \eqref{L1}, then 
\begin{equation} \label{L2}
L(-q;q^2)=\sum_{n=0}^\infty \frac{ q^{n(n+1)} (-q;q^2)_{n+1}}{(q^2;q^2)_n} = (-q^2; q^2 )_\infty (-q;q^4)_\infty.
\end{equation}
Therefore, for \eqref{LG2} and \eqref{L2}, we deduce that 
\begin{equation}\label{L3}
   \sum_{n=0}^{\infty}LG_1(n)q^n=\sum_{n=0}^\infty \frac{ q^{n(n+1)} (-q^{-1};q^2)_{n}}{(q^2;q^2)_n}= \sum_{n=0}^\infty \frac{ q^{n(n+1)} (-q;q^2)_{n+1}}{(q^2;q^2)_n} =\sum_{n=0}^{\infty}\overline{LG_1}(n)q^n.
\end{equation}
For the bijective proof, If the parity function in the  previous bijection is changed as follows:\\
\[ T(k)  := \begin{cases}
  0 & \mbox{ if $ \text{ $k$ is even} $}\\
  1 & \mbox{ if \text{ $k$ is odd}},
  \end{cases}
  \]
 then the defined map $g$ in Theorem \ref{FGG} provides a bijective proof of this theorem as well.
\end{proof}
\medskip

Let $LG_2(n)$ denote the number of partitions of $n$ where parts mutually differ by $2$ with no $1$'s such that no consecutive odd numbers appear as parts.
The second little G\"ollnitz identity states that $LG_2(n)$ equals the number of partitions into distinct parts congruent to 0, 2, or 3 modulo 4, and also 
equal to the number of partitions of $n$ into parts congruent to 2, 3, or 7 modulo 8~\cite[p. 83, Thm. 2.45]{S17}.

\begin{theorem}\label{SLG}
Let $\overline{LG_2}(n)$ denote the number of overpartitions of $n$ where the non-overlined parts are even and distinct, and the overlined parts are odd and at most $2$ times the number of non-overlined parts. Then
      \begin{equation}\label{LG2}
       LG_2(n)=\overline{LG_2}(n).
   \end{equation} 
\end{theorem}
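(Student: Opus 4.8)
The plan is to imitate the generating-function proof of Theorem~\ref{LG}: exhibit a single Lebesgue-type $q$-series that equals both $\sum LG_2(n)q^n$ and $\sum \overline{LG_2}(n)q^n$, and then observe that the map $g$ of Theorem~\ref{FGG}, run with the parity function swapped as in Theorem~\ref{LG}, restricts to the required bijection. Concretely, I aim to establish
\[
\sum_{n=0}^\infty LG_2(n)q^n = \sum_{n=0}^\infty \frac{q^{n(n+1)}(-q;q^2)_n}{(q^2;q^2)_n} = \sum_{n=0}^\infty \overline{LG_2}(n)q^n .
\]

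For the first equality I would invoke the classical second little G\"ollnitz identity quoted above, whereby $LG_2(n)$ counts partitions of $n$ into distinct parts congruent to $0,2,3 \pmod 4$, so that $\sum LG_2(n)q^n = (-q^2;q^4)_\infty(-q^3;q^4)_\infty(-q^4;q^4)_\infty = (-q^2;q^2)_\infty(-q^3;q^4)_\infty$ after merging the residues $0$ and $2 \pmod 4$ into all even parts. Applying Lebesgue's identity~\eqref{L1} with $q$ replaced by $q^2$ and $a:=-q$ gives
\[
L(-q;q^2) = \sum_{n=0}^\infty \frac{q^{n(n+1)}(-q;q^2)_n}{(q^2;q^2)_n} = (-q^2;q^2)_\infty(-q^3;q^4)_\infty ,
\]
matching the product and so identifying the Lebesgue series with $\sum LG_2(n)q^n$. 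For the second equality I would read the series termwise: for fixed $n$ the factor $q^{n(n+1)}/(q^2;q^2)_n = q^{2(1+2+\cdots+n)}/(q^2;q^2)_n$ generates partitions into exactly $n$ distinct positive even parts (the non-overlined parts), while $(-q;q^2)_n = \prod_{j=1}^n(1+q^{2j-1})$ selects distinct odd numbers from $\{1,3,\dots,2n-1\}$ (the overlined parts); since there are $n$ non-overlined parts and every overlined part is odd and at most $2n-1 \le 2n$, these are exactly the overpartitions enumerated by $\overline{LG_2}(n)$.

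For the bijective proof I would check that the map $g$ of Theorem~\ref{FGG}, run with the even/odd-swapped parity function of Theorem~\ref{LG}, already sends $LG_2[n]$ onto $\overline{LG_2}[n]$. The key observation is that under this map a smallest part equal to $1$ is precisely what produces the maximal overlined value: its non-overlined image is $0$ and is discarded, lowering the number $n$ of non-overlined parts by one while leaving the overlined part $2r-1 = 2n+1$ in place. Thus deleting the partitions containing a $1$ (passing from $LG_1$ to $LG_2$) corresponds exactly to forbidding the overlined value $2n+1$ (passing from $\overline{LG_1}$ to $\overline{LG_2}$), so $g$ restricts to a bijection $LG_2[n] \to \overline{LG_2}[n]$.

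The step I expect to require the most care is this last correspondence, not the $q$-series manipulation, which is routine once the product is in hand. In particular I must confirm the off-by-one bookkeeping: for $LG_2$ the factor $(-q;q^2)_n$ caps overlined parts at $2n-1$, whereas Theorem~\ref{LG} used $(-q;q^2)_{n+1}$ and allowed $2n+1$, and I must verify that forbidding a part equal to $1$ removes precisely the overpartitions attaining $2n+1$ and nothing more. A secondary check is that the choice $a:=-q$ (rather than $a:=-q^{-1}$ as in Theorem~\ref{LG}) is exactly what yields the factor $(-q^3;q^4)_\infty$ corresponding to the residues $0,2,3 \pmod 4$ that distinguish the second little G\"ollnitz case.
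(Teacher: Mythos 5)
Your proposal is correct, and its skeleton is the same as the paper's: the paper's entire generating-function proof of Theorem~\ref{SLG} is the chain $\sum_{n\ge 0} LG_2(n)q^n=\sum_{n\ge 0} q^{n^2+n}(-q;q^2)_n/(q^2;q^2)_n=\sum_{n\ge 0}\overline{LG_2}(n)q^n$, with the second equality read off termwise exactly as you do, and its bijective proof is the one-line remark that the map $g$ of Theorem~\ref{FGG} with the swapped parity function ``applies here as well.'' The one place you genuinely diverge is the first equality: the paper simply asserts that the series may be interpreted as $\sum LG_2(n)q^n$ (implicitly citing the known analytic form of the second little G\"ollnitz identity), whereas you derive it from the classical partition-theoretic statement (distinct parts $\equiv 0,2,3\pmod 4$) via the product $(-q^2;q^4)_\infty(-q^3;q^4)_\infty(-q^4;q^4)_\infty=(-q^2;q^2)_\infty(-q^3;q^4)_\infty$ and Lebesgue's identity~\eqref{L1} with $q\mapsto q^2$, $a:=-q$ --- precisely the technique the paper deploys for Theorem~\ref{LG} but not here. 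Your version is more self-contained at the $q$-series level; its cost is that it leans on the classical little G\"ollnitz theorem as a black box, whose standard proof itself passes through this very series, so strictly speaking you trade one citation for another. Finally, your analysis of the bijection --- that a smallest part $1$ in $\lambda$ is sent to a discarded non-overlined $0$ together with the extremal overlined part $2n+1$, so that forbidding $1$'s corresponds exactly to capping overlined parts at twice the number of non-overlined parts --- is correct and supplies the bookkeeping the paper leaves entirely implicit; this is a worthwhile addition rather than a deviation.
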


\begin{proof}[Generating function proof]
   The generating function proof is straightforward as the generating function may be interpreted in two different ways. 
     \begin{equation}
       \sum_{n=0}^\infty LG_2(n)q^n=\sum_{n=0}^\infty \frac{ q^{n^2+n}(-q;q^2)_n }{(q^2;q^2)_{n}}=\sum_{n=0}^\infty \overline{LG_2}(n)q^n.
   \end{equation}
   \end{proof}

\begin{proof}[Bijective proof] The same bijective proof used for Theorem \ref{LG} applies here as well. 
\end{proof}

\subsection{General Case}
Recall again V.-A. Lebesgue's identity~\eqref{L1}.
Let $k=4\alpha+\beta$, where $\alpha \geq 0$ and $\beta \in \{-1,0,1,2\}$, then by replacing $q$ by $q^2$, setting $a:=-q^k$ and multiplying both sides in \eqref{L1} by $(-q^{\beta+2};q^4)_\alpha$, we get 
\begin{equation} \label{L2}
L(-q^k;q^2)=\sum_{n=0}^\infty \frac{ q^{n(n+1)} (-q^k;q^2)_n (-q^{\beta+2};q^4)_\alpha}{(q^2;q^2)_n} = (-q^2; q^2 )_\infty (-q^{\beta+2};q^4)_\infty.
\end{equation}

\begin{equation} 
L(-q^k;q^2)=\sum_{n=0}^\infty \frac{ q^{n(n+1)} (-q^{k-2};q^2)_{n+1} (-q^{\beta+2};q^4)_{\alpha-1}}{(q^2;q^2)_n} = (-q^2; q^2 )_\infty (-q^{\beta+2};q^4)_\infty.
\end{equation}
The left hand side of ~\eqref{L2} may be interpreted (except when $\alpha=0$ and $\beta=0$) as the number of overpartitions of $n$ in which\\
\begin{enumerate}
    \item the non-overlined parts are even and distinct,
    \item overlined parts $\geq k$ are odd (even when $\beta$ is even) and at most $2\times$(the number of non-overlined parts)$+(k-2)$
    \item overlined parts $<k$ are $\equiv (\beta+2) \pmod 4$.
\end{enumerate}
In the case when $k=0$, $$\sum_{n=0}^\infty \frac{ q^{n(n+1)} (-1;q^2)_{n} }{(q^2;q^2)_n}=2\sum_{n=0}^\infty \frac{ q^{n(n+1)} (-q^2;q^2)_{n-1} }{(q^2;q^2)_n}$$
which will be interpreted as twice
the generating function of the number of overpartitions of $n$ in which the non-overlined parts 
are even and distinct and overlined parts are even and at most two less than twice the number 
of non-overlined parts. Note that, the right hand side possesses an overpartition interpretation 
if we allow $\overline{0}$ as a possible overlined part. 

The right hand side of~\eqref{L2} 
\[ L(-q^k;q^2)  = \begin{cases}
  (q^1,q^5,q^6;q^8)^{-1}_\infty & \mbox{ if \text{ $\beta=-1$}}\\
 (-q^2;q^4)_\infty(q^2;q^4)^{-1}_\infty & \mbox{ if $ \text{ $\beta=0$ } $}\\
  (q^2,q^3,q^7;q^8)^{-1}_\infty & \mbox{ if $ \text{ $\beta=1$ } $}\\
   (q^2,q^4,q^6;q^8)^{-1}_\infty & \mbox{ if $ \text{ $\beta=2$ } $}.
  \end{cases}
  \]

  Let $\alpha=0$ and $\beta=-1$ in~\eqref{L2} then we get~\eqref{LG2}. By letting $\alpha=0$ and $\beta=1$ in~\eqref{L2} then we get~\eqref{LG1}. 

In~\cite{SS11}, Savage and Sills derived new interpretations of little G\"ollnitz identities by utilizing the following specialization of the $q$-Gauss identity~\cite[p. 14, Eq. (1.5.1)]{GR04}:
\begin{equation} \label{HG1}
H(a,c;q):=\sum_{n=0}^\infty \frac{ q^{\frac{n(n-1)}{2}}(\frac{-c}{a})^n (a;q)_n}{(c;q)_n(q;q)_n} = \left(\frac{c}{a}; q \right)_\infty (c;q)^{-1}_\infty.
\end{equation}
 They showed that 
 \begin{equation} \label{HGL1}
H(-q, q^2;q^4)=\sum_{n=0}^\infty \frac{ q^{2n^2-n} (-q;q^4)_n}{(q^2;q^2)_{2n}} =  (q^1,q^5,q^6;q^8)^{-1}_\infty
\end{equation}
and 
\begin{equation} \label{HGL2}
H(-q^{-1},q^2;q^4)=\sum_{n=0}^\infty \frac{ q^{2n^2+n} (-q^{-1};q^4)_n}{(q^2;q^2)_{2n}} =  (q^2,q^3,q^7;q^8)^{-1}_\infty.
\end{equation}

By using the same manipulations, we can deduce that
\begin{equation} \label{HGL3}
H(-1,q^2;q^4)=\sum_{n=0}^\infty \frac{ q^{2n^2} (-1;q^4)_n}{(q^2;q^2)_{2n}} = (-q^2;q^4)_\infty(q^2;q^4)^{-1}_\infty
\end{equation}
and 
\begin{equation} \label{HGL4}
H(-q^{-2}, q^2;q^4)=\sum_{n=0}^\infty \frac{ q^{2n^2+2n} (-q^{-2};q^4)_n}{(q^2;q^2)_{2n}} =  (q^2,q^4,q^6;q^8)^{-1}_\infty.
\end{equation}

Therefore, by~\eqref{L2} when ($\alpha\geq0$ and $\beta=0$) and~\eqref{HGL3}
\begin{equation} \label{HGLL3}
\sum_{n=0}^\infty \frac{ q^{2n^2} (-1;q^4)_n}{(q^2;q^2)_{2n}} = \sum_{n=0}^\infty \frac{ q^{n(n+1)} (-q^{4\alpha};q^2)_n (-q^{4};q^4)_\alpha}{(q^2;q^2)_n},
\end{equation}
and by~\eqref{L2} when ($\alpha\geq0$ and $\beta=2$) and~\eqref{HGL4}
\begin{equation} \label{HGLL4}
\sum_{n=0}^\infty \frac{ q^{2n^2+2n} (-q^{-2};q^4)_n}{(q^2;q^2)_{2n}} = \sum_{n=0}^\infty \frac{ q^{n(n+1)} (-q^{4\alpha+2};q^2)_n (-q^{4};q^4)_\alpha}{(q^2;q^2)_n}.
\end{equation}

We can obtain another identity 
\begin{equation} \label{HGL5}
H(-q^{2},q^4;q^4)=\sum_{n=0}^\infty \frac{ q^{2n^2} (-q^{2};q^4)_n}{(q^4;q^4)^2_{n}} =  (q^2,q^6,q^8;q^8)^{-1}_\infty.
\end{equation}

Recall Slater~\cite[Eq. (47)]{S52} 

\begin{equation} \label{Sl47}
\sum_{n=0}^\infty \frac{ q^{n^2} (-1;q^2)_n}{(q;q)_{2n}} =\frac{(-q;q^2)_\infty(q^2,q^6;q^8)_\infty(q^4;q^4)_\infty}{(q;q)_\infty}=\frac{(-q;q^2)_\infty}{(q;q^2)_\infty},
\end{equation}
which equivalent to~\eqref{HGL3} when replacing $q^2$ by $q$. The left hand side may rewritten as $$2\sum_{n=0}^\infty \frac{ q^{n^2} (-q^2;q^2)_{n-1}}{(q;q)_{2n}}, $$
by dividing by $2$, we get Slater~\cite[Eqs. (121)]{S52} 

\begin{equation} \label{Sl121}
\sum_{n=0}^\infty \frac{ q^{n^2} (-q^2;q^2)_{n-1}}{(q;q)_{2n}} =\frac{(q^2,q^{14},q^{16};q^{16})_\infty(q^{12},q^{20};q^{32})_\infty}{(q;q)_\infty}=\frac{(q^2,q^{12},q^{14},q^{16},q^{18},q^{20},q^{30},q^{32};q^{32})_\infty}{(q;q)_\infty},
\end{equation}
and the left hand side is interpreted as the generating function for the number of overpartitions of $n$ in which the non-overlined 
parts take the form $\lambda_1>\lambda_2\geq \lambda_3>\lambda_4\cdots$ and the overlined parts 
are even and at most one less than twice the number of non-overlined parts.



\section{Stembridge-type Interpretations}

\subsection{Definitions and Stembridge results}
The \emph{Ferrers graph} $\mathscr{G}_\lambda$ of a partition 
$\lambda = (\lambda_1, \lambda_2, \dots, \lambda_\ell)$ is, formally,
the set of points with integer coordinates $(i,j)$ in the plane such that
$(i,j)\in\mathscr{G}_\lambda$ if and only if $-\ell+1 \leq i \leq 0$ and $0\leq j \leq \lambda_{i+1}-1$.  The Ferrers graph is easily understood through illustrative examples,
see, e.g., \cite[Section 1.3]{Andrews1976}.   The \emph{conjugate} $\lambda'$ of the
partition $\lambda$ is the partition associated with the Ferrers graph obtained from
$\mathscr{G}_\lambda$ by interchanging the rows and columns of dots.   
An equivalent definition is as follows:  Let $m_i = m_i (\lambda)$ denote the 
number of times $i$ appears as a part in $\lambda$.   The $i$-th largest part of
$\lambda'_i$ of the conjugate of $\lambda$ is given by $\sum_{j=i}^\infty m_j(\lambda)$.

To each partition $\lambda = (\lambda_1, \lambda_2, \dots, \lambda_\ell)$, 
we may assign a parameter $d = d(\lambda)$ equal to the number of parts $\lambda_j$
such that $\lambda_j \geq j$.  (Equivalently, $d(\lambda)$ is the number of nodes in 
the diagonal of the largest square in the upper left corner of the Ferrers graph.) 
The \emph{Frobenius representation} $\mathscr{F}_\lambda$ of a partition $\lambda$
is a $2 \times d(\lambda)$ matrix 
\[  \left( \begin{array}{cccc}  a_1 & a_2 &\cdots & a_d \\ b_1 & b_2 & \cdots & b_d  \end{array}
\right) ,\]
where $a_i = \lambda_i - i $  and $$b_i = \ell(\lambda) - \sum_{j=1}^{i-1} m_i (\lambda)$$
for $i = 1, 2, \dots, d$, where $\ell(\lambda)$ is the number of parts in $\lambda$.

J. Stembridge \cite{St1990} interprets the coefficient of $q^n$ in the expansion of
$$\sum_{n=0}^\infty \frac{q^{n^2}(-q;q^2)_n}{(q^2;q^2)_n}, $$ 
as 
the number of pairs of self-conjugate 
partitions $(\sigma,\tau)$ in which the largest part of $\sigma$ is at most the length of the main 
diagonal of $\tau$. We know that the self-conjugate partition of $n$ with the largest part $=r$  
corresponds to the  partition of $n$ into odd-distinct parts with the largest part $=(2r-1)$. Also 
the self-conjugate partitions of $n$ with the length of the main diagonal $=r$ correspond to the 
partitions of $n$ into odd-distinct  parts with the number of parts $=r$. Therefore, Stembridge's 
interpretation is equivalent to an overpartition interpretation.\\

We can obtain a similar interpretation for the 
coefficient of $q^n$ in $$\sum_{n=0}^\infty \frac{q^{n^2+2n}(-q;q^2)_n}{(q^2;q^2)_n}$$ as 
the number of pairs of self-conjugate partitions $(\sigma,\tau)$ in which the largest part of 
$\sigma$ is at most the length of the main diagonal of $\tau$ and the Frobenius representation 
of $\tau$ contains no zeros.

\subsection{Almost Self-Conjugate Partitions}
A partition $\lambda=(\lambda_1,\lambda_2,\cdots, \lambda_\ell)$ is called 
\emph{self-conjugate} 
if $\lambda' = \lambda$.  
Equivalently, $\lambda$ is self-conjugate if its Frobenius representation
$\mathscr{F}_\lambda$ is
of the form $$\begin{pmatrix}
a_1 & a_2 &\cdots & a_d\\
a_1 & a_2 &\cdots & a_d
\end{pmatrix}$$
where $d = d(\lambda)$ is defined above.

Define a partition $\lambda$ to be \emph{almost self-conjugate if} its Frobenius
representation $\mathscr{F}_\lambda$ is of the form
$$\begin{pmatrix}
a_1 + 1 & a_2 + 1 &\cdots & a_d + 1\\
a_1 & a_2 &\cdots & a_d
\end{pmatrix}.$$
Amdeberhan, Andrews, and Ballantine~\cite{Amd2023} proved the following identity:

\begin{proposition}
    The number of partitions of $n$ into distinct parts that are all even equals the number of almost-self-conjugate partitions of $n$.
\end{proposition}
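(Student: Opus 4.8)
The plan is to establish the identity in two ways, mirroring the paper's style: first a generating function computation, then a weight-preserving bijection, which I expect to be the cleaner route. First I would record the two generating functions. Partitions of $n$ into distinct even parts are counted by the coefficient of $q^n$ in
\[
\prod_{k\ge 1}\bigl(1+q^{2k}\bigr)=(-q^2;q^2)_\infty .
\]
For the almost self-conjugate side I would parametrize by the Frobenius symbol. Such a partition has Frobenius representation
\[
\begin{pmatrix} a_1+1 & a_2+1 & \cdots & a_d+1\\ a_1 & a_2 & \cdots & a_d\end{pmatrix},
\qquad a_1>a_2>\cdots>a_d\ge 0,
\]
and, using the standard fact that a partition with Frobenius columns $(a_i,b_i)$ has weight $d+\sum a_i+\sum b_i$, its weight is
\[
n=d+\sum_{i=1}^d (a_i+1)+\sum_{i=1}^d a_i=2\Bigl(d+\sum_{i=1}^d a_i\Bigr),
\]
so $n$ is automatically even, consistent with the even-parts side.

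Next I would compute the generating function grouping by $d$. For fixed $d$ the substitution $a_i=b_i+(d-i)$ with $b_1\ge b_2\ge\cdots\ge b_d\ge 0$ converts $\sum_{a_1>\cdots>a_d\ge0}q^{2\sum a_i}$ into $q^{d(d-1)}/(q^2;q^2)_d$, and incorporating the factor $q^{2d}$ from the term $2d$ above gives the total generating function for almost self-conjugate partitions as
\[
\sum_{d\ge 0}\frac{q^{d^2+d}}{(q^2;q^2)_d}.
\]
Replacing $q$ by $q^2$ in the identity $\sum_{n\ge0}q^{n(n+1)/2}/(q;q)_n=(-q;q)_\infty$ — which is \eqref{23} combined with the first equality in the proof of Theorem~\ref{D} — identifies this sum with $(-q^2;q^2)_\infty$, completing the generating function proof.

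For a bijective proof in the spirit of Theorem~\ref{D}, I would map a partition into distinct even parts $2\mu_1>2\mu_2>\cdots>2\mu_d$ (so $\mu_1>\cdots>\mu_d\ge 1$) to the almost self-conjugate partition whose Frobenius representation is
\[
\begin{pmatrix} \mu_1 & \mu_2 & \cdots & \mu_d\\ \mu_1-1 & \mu_2-1 & \cdots & \mu_d-1\end{pmatrix},
\]
i.e.\ $a_i=\mu_i-1$. Since the $\mu_i$ are distinct positive integers, the top row consists of distinct positive integers and the bottom row of distinct nonnegative integers, both strictly decreasing, so this is a legitimate Frobenius symbol of the almost self-conjugate shape; its weight is $2\sum\mu_i=n$, so weight is preserved. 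The inverse sends $a_i\mapsto\mu_i=a_i+1$ and reads off the even parts $2\mu_i$, giving a genuine bijection.

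The computations are routine; the only points needing care are (i) getting the exponent $d^2+d$ correct and recognizing the result as Euler's identity at $q^2$, and (ii) on the bijective side, checking that the constructed array is always a valid almost self-conjugate Frobenius symbol (strict decrease and nonnegativity of both rows) together with weight preservation. Neither presents a genuine obstacle, so I expect the bijection to be the cleanest argument and would feature it as the main proof, with the generating function identity offered as confirmation.
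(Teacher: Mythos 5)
Your proposal is correct, but note that the paper itself supplies no proof of this proposition at all: it is quoted as a known result of Amdeberhan, Andrews, and Ballantine~\cite{Amd2023}. So rather than matching or diverging from an in-paper argument, your write-up fills a gap, making the statement self-contained. Both of your arguments check out. The weight of an almost self-conjugate partition with Frobenius columns $(a_i+1,a_i)$, $a_1>\cdots>a_d\ge 0$, is $d+\sum_i(a_i+1)+\sum_i a_i=2\bigl(d+\sum_i a_i\bigr)$; the substitution $a_i=b_i+(d-i)$ gives $\sum_{d\ge 0}q^{d^2+d}/(q^2;q^2)_d$; and Euler's identity $\sum_{n\ge 0}q^{n(n+1)/2}/(q;q)_n=(-q;q)_\infty$ with $q\mapsto q^2$ identifies this with $(-q^2;q^2)_\infty$, the generating function for distinct even parts. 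Your bijection, sending $2\mu_1>\cdots>2\mu_d$ to the Frobenius symbol with columns $(\mu_i,\mu_i-1)$, is the same idea in combinatorial form: it is manifestly weight-preserving, lands on exactly the almost self-conjugate symbols, and inverts trivially, so featuring it as the main proof with the $q$-series computation as confirmation is a sound editorial choice; it also fits the paper's stated aim of giving bijective proofs alongside generating-function ones. One small caution: you invoke the ``standard fact'' that a Frobenius symbol $(a_i;b_i)$ has weight $d+\sum a_i+\sum b_i$, which is true for the usual convention $a_i=\lambda_i-i$, $b_i=\lambda_i'-i$; the paper's displayed formula for the $b_i$ (which amounts to $b_i=\lambda_i'$, missing the shift by $i$) is evidently a typo, and your proof correctly relies on the standard convention, under which ``self-conjugate $\iff$ equal rows'' also holds as the paper intends.
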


Now we can obtain a new interpretation of the second little G\"ollnitz identity in spirt of Stembridge's interpretations of first and second G\"ollnitz--Gordon identities. An alternative interpretation of $$\sum_{n=0}^\infty \frac{q^{n^2+n}(-q;q^2)_n}{(q^2;q^2)_n}$$ 
is as pairs of partitions $(\sigma,\tau)$ in which $\sigma$ is self-conjugate and $\tau$ is almost-self-conjugate and the largest part of $\sigma$ is at most the length of the main diagonal of $\tau$.\\

For the first little G\"ollnitz identity, pairs of partitions $(\sigma,\tau)$ in which $\sigma$ is self-conjugate and $\tau$ is almost-self-conjugate and the largest part of $\sigma$ is at most the length of the main diagonal of $\tau$ $+1$.

\end{document}